\theoremstyle{definition}
\newtheorem*{theorem*}{Theorem}
\newtheorem{proposition}[equation]{Proposition}
\newtheorem{lemma}[equation]{Lemma}
\newtheorem{definition}[equation]{Definition}
\newtheorem{remark}[equation]{Remark}
\renewcommand{\(}{\bigl(}
\renewcommand{\)}{\bigr)\vphantom{)}}
\newcommand{\imply}{\;\;\;\Longrightarrow\;\;\;}
\newcommand{\eps}{\varepsilon}
\newcommand{\F}{\mathcal F}
\newcommand{\R}{\mathbb R}
\renewcommand{\Pr}[1]{\mathbb{P}\mskip1.5mu\(\mskip1.5mu#1\mskip1.5mu\)}
\newcommand{\cP}[2]{\mathbb{P}\mskip1.5mu\(\mskip1.5mu#1\mskip1.5mu
 \big|\mskip1.5mu#2\mskip1.5mu\)}
\DeclareMathOperator{\dist}{dist}
\newcommand{\Unco}{\operatorname{Unco}}
\newcommand{\Proj}{\operatorname{Proj}}
\newcommand{\sif}{$\sigma$\nobreakdash-field}
\newcommand{\pixelated}[1]{$#1$\nobreakdash-\hspace{0pt}pixelated}
\begin{document}

\title{Random compact set  meets the graph\\
 of nonrandom continuous function}

\author{Boris Tsirelson}

\date{}
\maketitle

\begin{abstract}
On the plane, every random compact set with almost surely uncountable
first projection intersects with a high probability the graph of some
continuous function.

Implication: every black noise over the plane fails to factorize when
the plane is split by such graph.
\end{abstract}

\setcounter{tocdepth}{1}
\tableofcontents

\section*{Introduction and result}
\addcontentsline{toc}{section}{Introduction and result}
The following question is motivated by the theory of black noises (see
Sect.~\ref{sect3}).

Let $K$ be a random compact subset of the square $ [0,1]\times[0,1]
$. Given a continuous function $ f : [0,1] \to [0,1] $ we consider
its graph $ G_f = \{ (x,f(x)) : 0 \le x \le 1 \} $, the probability $
\Pr{ K \cap G_f \ne \emptyset } $ that $K$ meets the graph, and its
supremum
\[
\sup_f \Pr{ K \cap G_f \ne \emptyset }
\]
over all continuous functions $f$. The question is, whether the
supremum is equal to $1$, or not.

Clearly, this probability need not be positive if $K$ is (at most)
countable almost surely. The same holds if $K$ is uncountable but its
first projection
\[
\Proj_1 (K) = \{ x \in [0,1] : \exists y \in [0,1] \;\> (x,y) \in K \}
\]
is (at most) countable.

We denote by $ \Unco_1 $ the set of all uncountable subsets of
$[0,1]$, and by $ \Unco_2 $ the set of all subsets of
$[0,1]\times[0,1]$ whose first projection is uncountable.

\begin{theorem*}
Let a random compact set $ K \subset [0,1]\times[0,1] $ belong to $
\Unco_2 $ almost surely. Then for every $ \eps>0 $ there exists a
continuous function $ f : [0,1] \to [0,1] $ such that $ \Pr{ K \cap
G_f \ne \emptyset } \ge 1-\eps $.
\end{theorem*}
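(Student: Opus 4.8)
The plan is to randomise the choice of $f$ and thereby reduce the theorem to a geometric statement about one (random) function confronting one arbitrary, but $\mu$-typical, set. Writing $\mu$ for the law of $K$ and $\nu$ for a probability measure on $C([0,1],[0,1])$ to be chosen, Fubini's theorem gives
\[
\int \mathbb{P}\bigl(G_f\cap K\ne\emptyset\bigr)\,\D\nu(f)
 =\mathbb{E}_{K}\,\nu\bigl(\{f:G_f\cap K\ne\emptyset\}\bigr),
\]
both sides being measurable for the hit-or-miss Borel structure on compacta and the Polish structure on $C([0,1],[0,1])$. Hence it suffices to produce one $\nu$ for which the right-hand side is $\ge 1-\eps$: some $f$ then satisfies the theorem. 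I would let $\nu$ depend on $\mu$, which is essential, since no \emph{universal} $\nu$ can work: a perfect set $P$ squeezed into a tiny interval $[0,\delta]$ forces $F(P)\subseteq F([0,\delta])$, whose diameter tends to $0$ with $\delta$ for every continuous realisation, so an adversarial fixed $(P,y_0)$ with $K=P\times\{y_0\}$ and $y_0$ outside the small random set $F([0,\delta])$ is avoided with probability close to~$1$.

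Next I would convert the qualitative hypothesis $K\in\Unco_2$ into a quantitative, scale-localised one. For compact $A\subseteq[0,1]$ let $T_A\subseteq 2^{<\omega}$ be the tree of dyadic intervals meeting $A$; then $A$ is uncountable exactly when $T_A$ carries a perfect subtree, i.e.\ a Cantor scheme $(I_s)_{s\in 2^{<\omega}}$ of nested dyadic intervals with disjoint children, each meeting $A$. Applied to $A=\Proj_1(K)$ and recording the nonempty nested $y$-ranges $J_s=\Proj_2\bigl(K\cap(\overline{I_s}\times[0,1])\bigr)$, compactness yields, over every branch $\xi\in2^\omega$, a genuine point of $K$ with abscissa $\bigcap_n I_{\xi|n}$ and ordinate in $\bigcap_n J_{\xi|n}$. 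Since the scheme exists for $\mu$-a.e.\ $K$ but at an a priori random depth and scale, I would use countable additivity over the countably many finite schemes to fix, at the cost of an $\eps$-fraction of probability, a finite binary branching pattern of some depth $m$ all of whose $2^m$ leaf-intervals meet $\Proj_1(K)$, realised above a definite scale.

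The target then becomes: design $\nu$ so that, conditionally on the pattern, $G_F$ meets $K$ above at least one leaf with probability $\ge 1-\eps$. Over a single leaf-interval $I$ it is not enough to sweep $F$ across the whole $y$-axis, because $K\cap(\overline{I}\times[0,1])$ may be a single point and a continuous $F$ hits a prescribed point with probability $0$; what does the work is an intermediate-value argument trapped inside $K$ by the branching. At a splitting node the two children still meet $K$, so $J_{s0},J_{s1}$ furnish two targets, and I would build $F$ by an independent random choice at each node (a pixelated increment, whence the grid language) arranged so that, with probability bounded below by an absolute constant $c_0$, the graph of $F$ is forced to change sides of $K$'s range between the two children and hence to cross $K$ within one of them. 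Making these per-node events sufficiently independent across the $\sim 2^m$ leaves bounds the probability of avoiding all of them by $(1-c_0)^{N}$, which is $\le\eps$ once $m$ is large; combined with the first two paragraphs this gives the theorem.

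The main obstacle is exactly this trapping. The intermediate-value theorem on the connected interval $[0,1]$ produces crossings of $F$ with the semicontinuous envelopes $x\mapsto\max K_x$ and $x\mapsto\min K_x$, but a priori such crossings may fall in the gaps, outside $K$ itself, so that $F$ threads $K$ after all. Converting \emph{a crossing happens somewhere} into \emph{a crossing happens at a point of $K$, with quantified probability, uniformly in the random and arbitrarily fine geometry} is the heart of the matter, and is where I expect the pixelated construction to be indispensable: discretising both $K$ and $G_F$ at a matched scale turns the trapping into a finite combinatorial statement about occupied columns and a random lattice path, from which the uniform lower bound $c_0$ on the per-node crossing probability can be extracted and then amplified through the branching.
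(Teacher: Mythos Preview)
Your Fubini reduction and the idea of randomising $f$ via nested pixelated graphs match the paper's framework. The gap is in the mechanism you propose for forcing a hit.

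The step ``the graph of $F$ is forced to change sides of $K$'s range between the two children and hence to cross $K$ within one of them'' does not hold. Changing sides of the $y$-range of $K$ does not imply meeting $K$: if $K$ above $I_{s0}$ is the single point $(x_0,0.3)$ and above $I_{s1}$ is $(x_1,0.7)$, a monotone $F$ rising from $0.2$ to $0.8$ crosses heights $0.3$ and $0.7$ but generically not at $x_0$ or $x_1$. The branching in your Cantor scheme is purely horizontal---it lives in $\Proj_1(K)$---and supplies no vertical trapping. Pixelation at a matched finite scale does not rescue this: above a leaf, $K$ may occupy a single pixel, and a random lattice path hits a prescribed pixel in a column with probability inversely proportional to the vertical resolution, so no uniform $c_0>0$ exists. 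More broadly, once you freeze a finite depth $m$ you have thrown away exactly the information needed, since nothing then constrains the vertical structure of $K$ above the leaves.

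The paper avoids the intermediate-value theorem altogether. Instead of trying to hit $K$ at any finite stage, it maintains the invariant $K\cap G_n\in\Unco_2$ through an \emph{infinite} sequence of refinements $G_1\supset G_2\supset\cdots$, with $G_n$ an $(m_n,n)$-pixelated graph. The driving observation is elementary: if a column has uncountable first projection of $A$, then at least one of its two vertical halves still does; hence a random vertical bit preserves the $\Unco_2$ property with probability at least $1/2$ per good column, and with $i(m_n)\to\infty$ independent good columns the invariant survives one refinement with probability at least $1-2^{-i(m_n)}$. Choosing $m_n$ so that these errors sum to at most $\eps$ gives $\Pr{K\cap G_n\in\Unco_2\text{ for all }n}\ge 1-\eps$, and compactness of $K$ then yields $K\cap G_f\ne\emptyset$ for $G_f=\bigcap_n G_n$. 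No crossing constant, no trapping, and crucially no reduction to a single finite depth: the uncountability hypothesis is used at every scale, not discharged once.
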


In particular, $K$ can be a set of zero Hausdorff dimension, shifted
at random. The corresponding $f$ cannot be nice!

\smallskip

The author thanks Ohad Feldheim and Tom Ellis for helpful discussions
related to Sect.~\ref{sect3}.

\numberwithin{equation}{section}

\section[The construction]
  {\raggedright The construction}
\label{sect1}
We use the probabilistic method: construct a random $f$ and prove that
it fits with positive probability.

\begin{definition}
A \emph{\pixelated{(m,n)} graph} is a connected subset of $
[0,1]\times[0,1] $ of the form
\[
G = \bigcup_{k=1}^{2^m} \Big[ \frac{k-1}{2^m},\frac{k}{2^m} \Big]
\times \Big[ \frac{l_k-1}{2^n},\frac{l_k}{2^n} \Big]
\]
where $ l_1,\dots,l_{2^m} \in \{1,\dots,2^n\} $.
\end{definition}

\[
\begin{gathered}\includegraphics{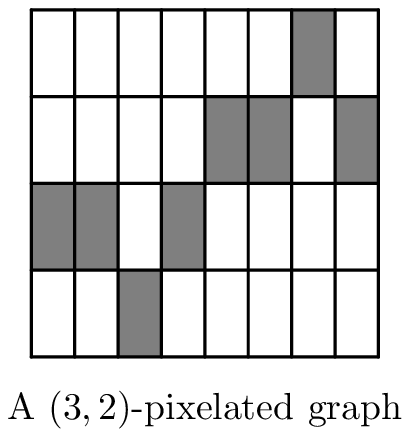}\end{gathered}
\]

\begin{lemma}\label{1.2}
Assume that $ m_1<m_2<\dots $, and \pixelated{(m_n,n)} graphs $ G_n $
are a decreasing sequence: $ G_1 \supset G_2 \supset G_3 \supset \dots
$ Then their intersection $ \cap_n G_n $ is the graph of some
continuous function $ [0,1] \to [0,1] $.
\end{lemma}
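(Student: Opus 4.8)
The plan is to reconstruct $f$ slice by slice. Fix $x \in [0,1]$ and write $S_n(x) = \{y : (x,y) \in G_n\}$ for the vertical section of $G_n$ above $x$. First I will analyze a single \pixelated{(m_n,n)} graph $G_n$, with levels $l_1,\dots,l_{2^{m_n}}$. Since the columns $[(k-1)2^{-m_n}, k\,2^{-m_n}]$ cover $[0,1]$, we have $\Proj_1(G_n) = [0,1]$, so every $S_n(x)$ is nonempty, and it is closed because $G_n$ is. The crucial point is that connectedness of $G_n$ forces the Lipschitz condition $|l_k - l_{k+1}| \le 1$ for all $k$: rectangles in non-adjacent columns have disjoint $x$-ranges, while rectangles in adjacent columns $k,k+1$ meet only if the $y$-intervals $[(l_k-1)2^{-n}, l_k 2^{-n}]$ and $[(l_{k+1}-1)2^{-n}, l_{k+1}2^{-n}]$ overlap, i.e.\ $|l_k-l_{k+1}| \le 1$; hence $G_n$ is connected iff every consecutive pair of rectangles touches. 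Consequently each section $S_n(x)$ is a single closed interval of length at most $2 \cdot 2^{-n}$ (length $2^{-n}$ at an interior $x$, and the union of two overlapping such intervals at a dyadic boundary point).

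Next, for fixed $x$ the inclusion $G_{n+1} \subset G_n$ gives $S_{n+1}(x) \subset S_n(x)$. Thus $\bigl(S_n(x)\bigr)_n$ is a decreasing sequence of nonempty compact intervals whose lengths tend to $0$, so by Cantor's nested-interval theorem the intersection $\bigcap_n S_n(x)$ is a single point, which I define to be $f(x)$. By construction $(x,y) \in \bigcap_n G_n$ iff $y \in \bigcap_n S_n(x) = \{f(x)\}$, so $\bigcap_n G_n = G_f$ is exactly the graph of $f$.

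It then remains to prove that $f$ is continuous, and here I use that $f(x) \in S_n(x)$ for every $n$, since the intersection lies in each term. If $|x - x'| \le 2^{-m_n}$, then $x$ and $x'$ touch at most three consecutive columns, whose levels lie (by the Lipschitz condition) in a window of size at most $2$; hence $f(x)$ and $f(x')$ both belong to a common interval of length at most $3 \cdot 2^{-n}$, giving $|f(x) - f(x')| \le 3 \cdot 2^{-n}$. Since $m_n \to \infty$ and $2^{-n} \to 0$, this is a modulus of continuity tending to $0$, so $f$ is uniformly continuous, completing the proof.

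I expect the main obstacle to be the careful use of the connectedness hypothesis. It is precisely connectedness that rules out a section $S_n(x)$ consisting of two far-apart intervals; without it the nested sections need not shrink to a point (so $f$ would be ill-defined or multivalued) and the diameter estimate underlying continuity would fail. The delicate case is a dyadic $x = k\,2^{-m_n}$, which is a column boundary for all large $n$ simultaneously, where one must verify that the two meeting rectangles genuinely overlap rather than being separated by a gap.
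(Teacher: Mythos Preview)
The paper does not prove this lemma at all: it states ``The proof is left to the reader.'' Your argument is correct and is exactly the routine verification the author is inviting---connectedness forces the step condition $|l_k-l_{k+1}|\le 1$, so vertical sections are nested closed intervals of diameter $\le 2\cdot 2^{-n}$, their intersection defines $f$, and the same step condition yields a modulus of continuity.
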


The proof is left to the reader.

\[
\begin{gathered}\includegraphics{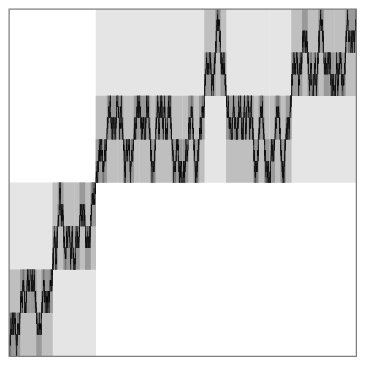}\end{gathered}
\]

Given $ m_1<m_2<\dots $ we construct a random decreasing sequence $
G_1 \supset G_2 \supset G_3 \supset \dots $ of \pixelated{(m_n,n)}
graphs $ G_n $ (and therefore, by Lemma \ref{1.2}, a random continuous
function) as follows.

First, the set of all \pixelated{(m,n)} graphs contains $ (2^n)^{2^m}
= 2^{n\cdot2^m} $ elements. In particular, the set of all
\pixelated{(m_1,1)} graphs contains $ 2^{2^{m_1}} $ elements, indexed
by $ 2^{m_1} $ bits $ l_1,\dots,l_{2^{m_1}} \in \{1,2\} $. We
choose at random one of these graphs (each with probability $
2^{-2^{m_1}} $); this is $ G_1 $.

Conditionally, given $ G_1 $, we consider the set of all
\pixelated{(m_2,2)} graphs contained in $ G_1 $. Each of these
consists of $ 2^{m_1} $ rescaled \pixelated{(m_2-m_1,1)} graphs
(situated in the $ 2^{m_1} $ rectangles that constitute $ G_1
$). Thus, the number of elements in this set is at most $ (
2^{2^{m_2-m_1}} )^{2^{m_1}} = 2^{2^{m_2}} $. It is in fact less than $
2^{2^{m_2}} $ because of connectedness. However, connectedness
restricts at most two bits (the leftmost and the rightmost) in each
block of $ 2^{m_2-m_1} $ bits. Thus, the number of elements is at
least $ ( 2^{2^{m_2-m_1}-2} \)^{2^{m_1}} = 2^{2^{m_2}-2^{m_1+1}} $. We
fix the two bits (leftmost and rightmost) in each block as to ensure
connectedness (no matter how do we fix them when we have a choice) and
randomize all other bits; this way we define $ G_2 $.

And so on.

\section[The proof]
  {\raggedright The proof}
\label{sect2}
\begin{proposition}\label{2.1}
Let a random compact set $ K \subset [0,1]\times[0,1] $ belong to $
\Unco_2 $ almost surely. Then for every $ \eps>0 $ there exist $ m_1 <
m_2 < \dots $ such that the random decreasing sequence $ G_1 \supset
G_2 \supset \dots $ constructed in Sect.~\ref{sect1} satisfies
\[
\Pr{ K \cap G_n \ne \emptyset } \ge 1-\eps
\]
for all $n$. (It is meant that the two random objects $K$ and
$(G_n)_n$ are independent.)
\end{proposition}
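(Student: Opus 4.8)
The plan is to reduce the geometric hitting statement to tracking, level by level, a single \emph{good} block of $G_n$, where a block (one of the $2^{m_n}$ rectangles constituting $G_n$) is called good if its intersection with $K$ still has uncountable first projection. Writing $D_n$ for the event that $G_n$ possesses at least one good block (and $D_0$ for the almost sure event that $[0,1]^2$ itself is good), I note that a good block is automatically nonempty in $K$, so $D_n\subseteq\{K\cap G_n\ne\emptyset\}$; hence it suffices to arrange $\Pr{D_n}\ge 1-\eps$ for every $n$. Moreover, since $G_n\subseteq G_{n-1}$, each block of $G_n$ is born inside a block of $G_{n-1}$ with a larger, hence still uncountable, $K$-section, so the events $D_n$ are decreasing and $\Pr{D_n^c}=\sum_{j=1}^n\Pr{D_{j-1}\cap D_j^c}$. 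The whole proof comes down to bounding each ``first failure'' term $\Pr{D_{j-1}\cap D_j^c}$ by a prescribed $\eps_j$ with $\sum_j\eps_j\le\eps$.

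The engine is a purely combinatorial fact about uncountable compact sets, which I would isolate as a lemma: if $A\subseteq[0,1]$ is compact and uncountable, then the number $U_A(m)$ of dyadic intervals of length $2^{-m}$ meeting $A$ in an uncountable set tends to $\infty$ as $m\to\infty$. I would prove it by a Cantor--Bendixson / K\"onig-tree argument: the dyadic intervals $I$ with $A\cap I$ uncountable form an infinite, binary-branching tree, since each such $I$ has a half with uncountable section. Were $U_A(m)$ bounded, this tree would from some scale on be a finite union of non-branching rays shrinking to finitely many points, forcing $A$ to be countable off those points, a contradiction.

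With the lemma in hand I would run the inductive choice of $m_1<m_2<\cdots$. Suppose $m_1,\dots,m_{j-1}$ are fixed and condition on $D_{j-1}$; let $B_{j-1}$ be, say, the leftmost good block, so $A:=\Proj_1(K\cap B_{j-1})$ is uncountable. Refining $B_{j-1}$ to level $j$ subdivides its width into sub-columns and its height into a bottom and a top sub-cell; in each sub-column whose $K$-section projects to an uncountable set, at least one of the two sub-cells is good, and along an interior sub-column the random path enters that good sub-cell with probability at least $\tfrac12$, independently across sub-columns. Discarding the two connectivity-fixed boundary sub-columns, this gives, on $D_{j-1}$, the conditional bound $\cP{D_j^c}{\mathcal F_{j-1}}\le 2^{-(U_A(m_j)-2)}$, where $\mathcal F_{j-1}$ carries $K$ and $G_1,\dots,G_{j-1}$. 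Since $U_A(m)\to\infty$ almost surely and the integrand is bounded, dominated convergence lets me pick $m_j>m_{j-1}$ so large that $\Pr{D_{j-1}\cap D_j^c}\le\eps_j:=\eps\,2^{-j}$. Summing the telescoped bound gives $\Pr{D_n^c}<\eps$, hence $\Pr{K\cap G_n\ne\emptyset}\ge 1-\eps$ for every $n$.

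The main obstacle I expect is the engine lemma together with the uniformity it must supply: one deterministic $m_j$ has to work simultaneously for almost every realization of the random section $A=\Proj_1(K\cap B_{j-1})$, even though the rate at which $U_A(m)\to\infty$ depends on that realization. Passing through almost sure divergence and dominated convergence, rather than any uniform modulus, is what legitimizes the deterministic choice of the $m_j$; the connectivity constraint is harmless, costing only the two fixed boundary sub-columns per block against a count that diverges. A minor point to keep honest throughout is Borel-measurability of ``uncountable section'' and of the derived quantities $U_A$ and ``leftmost good block,'' which I would settle once and then suppress.
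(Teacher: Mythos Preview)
Your proof is correct and follows the paper's argument essentially step for step. Your event $D_n$ coincides with the paper's $\{K\cap G_n\in\Unco_2\}$ (a good block exists iff $\Proj_1(K\cap G_n)$ is uncountable), your engine lemma is Lemma~\ref{2.2}, your conditional bound $2^{-(U_A(m_j)-2)}$ is Lemma~\ref{2.3} together with Remark~\ref{2.4}, and your telescoped choice of $m_j$ via dominated convergence matches the paper's inductive selection with errors $\eps/2^{n+1}$.
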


Before proving the proposition we check that it implies the
theorem. First,
\[
\Pr{ \forall n \;\> K \cap G_n \ne \emptyset } \ge 1-\eps
\]
by monotonicity. Second,
\[
K \cap G_n \ne \emptyset \imply \dist(K,G) \le 2^{-n}
\]
where $ G = \cap_n G_n $. By Lemma \ref{1.2}, $ G = G_f $ for some
random continuous function $f$. By compactness,
\[
\Pr{ K \cap G_f \ne \emptyset } = \Pr{ \dist(K,G_f)=0 } \ge 1-\eps \,
.
\]
Therefore the conditional probability $ \cP{ K \cap G_f \ne \emptyset
}{ f } $ is at least $1-\eps$ with a positive probability, and we are
done.

It remains to prove Prop.~\ref{2.1}.

\begin{lemma}\label{2.2}
If $ B \in \Unco_1 $ then
\[
\# \Big\{ k \in \{1,\dots,2^n\} : B \cap \Big( \frac{k-1}{2^n},
\frac{k}{2^n} \Big) \in \Unco_1 \Big\} \uparrow \infty \quad \text{as
} n \to \infty \, .
\]
\end{lemma}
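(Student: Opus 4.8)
The plan is to track the integer
$N(n) := \#\{\, k\in\{1,\dots,2^n\} : B \cap (\frac{k-1}{2^n},\frac{k}{2^n}) \in \Unco_1 \,\}$
and to establish two facts: that $N(n)$ is non-decreasing in $n$, and that it is unbounded. Together these give $N(n)\uparrow\infty$. Throughout, the one routine point to keep in mind is that the dyadic endpoints form a countable set, so moving between open and closed intervals never affects uncountability bookkeeping.

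First I would prove monotonicity. Each level-$n$ open interval $(\frac{k-1}{2^n},\frac{k}{2^n})$ is the disjoint union of its two level-$(n+1)$ open ``children'' and the single midpoint $\frac{2k-1}{2^{n+1}}$. Since a one-point set is countable, if $B$ meets the parent in an uncountable set then it meets at least one child in an uncountable set; conversely, each open child lies inside its open parent, so an uncountable child forces an uncountable parent. Hence the map ``child $\mapsto$ parent'' carries the uncountable intervals at level $n+1$ onto a subset of the uncountable intervals at level $n$, with every uncountable parent receiving at least one uncountable child. As children of distinct parents are disjoint, this yields $N(n+1)\ge N(n)$.

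For unboundedness I would argue by contradiction. If $N(n)$ were bounded, then being non-decreasing and integer-valued it would be eventually constant, say $N(n)=N$ for all $n\ge n_0$. Counting the uncountable level-$(n+1)$ intervals as a sum over the $N$ uncountable level-$n$ parents of their uncountable children — each summand being at least $1$, the total being $N$ — forces each such parent to have \emph{exactly one} uncountable child. Thus from level $n_0$ onward the uncountable intervals organise into $N$ nested chains $I_{n_0}\supset I_{n_0+1}\supset\cdots$ of open dyadic intervals whose lengths tend to $0$.

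The main step, and the real difficulty, is to contradict a single such chain. I would write
$B\cap I_{n_0} = \bigl(B\cap\bigcap_n I_n\bigr)\cup\bigcup_{n\ge n_0} B\cap(I_n\setminus I_{n+1})$,
a disjoint decomposition. The intersection $\bigcap_n I_n$ is at most a single point. For each $n$, the set $I_n\setminus I_{n+1}$ consists of the midpoint of $I_n$ together with the sibling of $I_{n+1}$; that sibling is, by construction, not one of the uncountable intervals, so $B$ meets it in a countable set. Hence $B\cap I_{n_0}$ is a countable union of countable sets plus one extra point, i.e.\ countable — contradicting that $I_{n_0}$ is an uncountable interval. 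Therefore $N(n)$ is unbounded, and since it is non-decreasing, $N(n)\uparrow\infty$. The entire weight of the argument sits in this last peeling step, where the uncountable content of a chain is shown to be exhausted by countably many countable sibling-pieces.
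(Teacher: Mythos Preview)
Your proof is correct and follows the same idea as the paper's one-line argument: if the count stayed bounded, $B$ would be locally countable away from a finite set (namely the limit points of your $N$ nested chains), hence countable. You have simply made explicit the monotonicity step and the sibling-peeling that the paper compresses into the phrase ``locally countable outside this finite set.''
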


\begin{proof}
Otherwise we get a finite subset of $ [0,1] $ such that $ B $ is
locally countable outside this finite set.
\end{proof}

Recall that, according to Sect.~\ref{sect1}, $ G_1 $ is a random
\pixelated{(m_1,1)} graph. In the next lemma we denote it by $
G_1(m_1) $.

\begin{lemma}\label{2.3}
If $ A \in \Unco_2 $ then
\[
\Pr{ A \cap G_1(m_1) \in \Unco_2 } \to 1 \quad \text{as } m_1 \to
\infty \, .
\]
\end{lemma}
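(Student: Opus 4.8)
The plan is to reduce everything to the observation that $A\cap G_1(m_1)\in\Unco_2$ as soon as the part of $A$ surviving inside a \emph{single} vertical strip has uncountable first projection, and then to show that with high probability at least one strip survives in this sense. The crucial feature of the case $n=1$ is that $G_1(m_1)$ is determined by $2^{m_1}$ \emph{independent} uniform bits $l_1,\dots,l_{2^{m_1}}\in\{1,2\}$: in the $k$-th strip the value $l_k=1$ keeps the lower half $\{y\le\frac12\}$ and $l_k=2$ keeps the upper half $\{y\ge\frac12\}$.

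First I would put $B=\Proj_1(A)\in\Unco_1$ and call a strip $k$ \emph{good} when $B\cap\bigl(\frac{k-1}{2^{m_1}},\frac{k}{2^{m_1}}\bigr)\in\Unco_1$. By Lemma~\ref{2.2} the number $N(m_1)$ of good strips tends to infinity with $m_1$. Passing to the open intervals of Lemma~\ref{2.2} is harmless, since the finitely many strip endpoints form a countable set of abscissae whose removal cannot spoil uncountability.

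Next, fixing a good strip $k$, I would look at $A_k=A\cap\bigl(\bigl(\frac{k-1}{2^{m_1}},\frac{k}{2^{m_1}}\bigr)\times[0,1]\bigr)$, whose first projection is exactly $B\cap\bigl(\frac{k-1}{2^{m_1}},\frac{k}{2^{m_1}}\bigr)$, hence uncountable. Splitting $A_k$ along $y=\frac12$, its projection is the union of the projections of the lower and upper parts, so at least one of the two halves already has uncountable first projection. Consequently the uniform bit $l_k$ selects a half with uncountable projection with probability at least $\frac12$, and on that event $A\cap G_1(m_1)$ contains that half of $A_k$ and therefore lies in $\Unco_2$.

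Finally I would combine the strips. The events ``strip $k$ keeps a half with uncountable projection'' depend on the disjoint bits $l_k$, so they are independent, each of probability at least $\frac12$ for good $k$; hence the chance that every good strip fails is at most $2^{-N(m_1)}$, and
\[
\Pr{ A \cap G_1(m_1) \in \Unco_2 } \ge 1 - 2^{-N(m_1)} ,
\]
which converges to $1$ because $N(m_1)\to\infty$. The only point requiring care is the bookkeeping between the closed pixels of the graph and the open intervals of Lemma~\ref{2.2}, including the negligible line $y=\frac12$; once one notes that a single good strip suffices and that the governing bits are independent, the probabilistic estimate is immediate.
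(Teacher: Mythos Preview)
Your proposal is correct and follows essentially the same route as the paper: apply Lemma~\ref{2.2} to $B=\Proj_1(A)$ to obtain $N(m_1)\to\infty$ good strips, observe that in each good strip at least one of the two half-pixels satisfies the $\Unco_2$ condition, and use independence of the bits $l_k$ to get $\Pr{A\cap G_1(m_1)\in\Unco_2}\ge 1-2^{-N(m_1)}$. Your $N(m_1)$ is the paper's $i(m_1)$, and your extra remarks about open versus closed intervals and the line $y=\tfrac12$ simply spell out bookkeeping that the paper leaves implicit.
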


\begin{proof}
Lemma \ref{2.2} applied to the first projection $ B = \Proj_1(A) \in
\Unco_1 $ of $ A $ gives $ i(m_1) \uparrow \infty $ as $ m_1 \to
\infty $, where
\[
i(m_1) = \# \Big\{ k \in \{1,\dots,2^{m_1}\} : B \cap \Big(
\frac{k-1}{2^{m_1}}, \frac{k}{2^{m_1}} \Big) \in \Unco_1 \Big\} \, .
\]
On the other hand,
\[
\Pr{ A \cap G_1(m_1) \in \Unco_2 } \ge 1 - 2^{-i(m_1)} \, ,
\]
since the relation $ B \cap \( \frac{k-1}{2^{m_1}},\frac{k}{2^{m_1}}
\) \in \Unco_1 $ implies at least one of two relations $ A \cap \(
[ \frac{k-1}{2^{m_1}},\frac{k}{2^{m_1}} ] \times [
  \frac{l_k-1}{2},\frac{l_k}{2} ] \) \in \Unco_2 $, $ l_k=1,2 $.
\end{proof}

\begin{remark}\label{2.4}
Lemma \ref{2.3} remains true if the leftmost and rightmost bits $ l_1,
l_{2^{m_1}} $ are fixed (not randomized). In the proof we just replace
$ 2^{-i(m_1)} $ with $ 2^{-(i(m_1)-2)} $.
\end{remark}

\begin{proof}[Proof of Prop.~\ref{2.1}]
It is sufficient to ensure $ \Pr{ K \cap G_1 \notin \Unco_2 } \le
\frac\eps2 $ and
\[
\Pr{\, K \cap G_n \in \Unco_2 \;\land\; K \cap G_{n+1} \notin
\Unco_2 \,} \le \frac\eps{2^{n+1}}
\]
for all $n$.

First, by Lemma \ref{2.3}, the conditional probability
\[
\cP{ K \cap G_1(m_1) \in \Unco_2 }{ K } \to 1 \quad \text{as
} m_1 \to \infty
\]
almost surely. The similar relation for unconditional probability
follows. Thus, $ \Pr{ K \cap G_1 \notin \Unco_2 } \le \frac\eps2 $ if
$ m_1 $ is large enough.

Second, we prove that
\[
\Pr{\, K \cap G_1 \in \Unco_2 \;\land\; K \cap G_2 \notin \Unco_2 \,}
\to 0 \quad \text{as } m_2 \to \infty
\]
(and therefore does not exceed $ \frac\eps4 $ if $m_2$ is large
enough). To this end it is sufficient to prove that $ \cP{ K \cap G_2
\notin \Unco_2 }{ K,G_1 }  \to 0 $ for almost all pairs $ (K,G_1) $
such that $ K \cap G_1 \in \Unco_2 $.

As noted in Sect.~\ref{sect1}, conditionally, given $ G_1 $, the
\pixelated{(m_2,2)} graph $ G_2 $ consists of $ 2^{m_1} $ rescaled
\pixelated{(m_2-m_1,1)} graphs situated in the $ 2^{m_1} $ rectangles
that constitute $ G_1 $. At least one of these rectangles $ R $
satisfies $ K \cap R \in \Unco_2 $. It remains to apply Lemma
\ref{2.3} (and Remark \ref{2.4}) to $ G_2 \cap R $ (rescaled).

The same argument works for $ G_3 $, $ G_4 $ and so on.
\end{proof}

\section[Implications for noise theory]
  {\raggedright Implications for noise theory}
\label{sect3}
We still have no definitive presentation of the general theory of
noises over the plane, but we have examples \cite{SS}, \cite{EF} and
some discussion \cite[Sect.~1f]{Ts11}. For noises over the line we
have a general theory \cite{Ts04}, including the spectral theory
\cite[Sect.~9]{Ts04} (see also \cite[Sect.~7b]{Ts11}).

For a black noise over $\R$ spectral sets are uncountable compact
subsets of $\R$ (except for the empty set). Likewise, for a black
noise over $\R^2$ spectral sets are uncountable compact subsets of
$\R^2$. Moreover, they belong to $ \Unco_2 $, since the corresponding
noise over $\R$ is still black \cite[Sect.~1f]{Ts11}. Thus, they meet
the graph of some continuous function $ f : \R \to \R $ with a
positive probability. This means failure of the factorization
property:
\[
\F_{G_f^-} \vee \F_{G_f^+} \ne \F \, ;
\]
here $ G_f^- = \{ (x,y) : y<f(x) \} $, $ G_f^+ = \{ (x,y) : y>f(x) \}
$ are planar domains, and $ \F_{G_f^-} $, $ \F_{G_f^+} $ are the
corresponding sub-\sif s of the \sif\ $ \F = \F_{\R^2} $ generated by
the black noise.

It is interesting to consider ``curvilinear strips''
\[
G_f^{a,b} = \{ (x,y) : a+f(x) < y < b+f(x) \} \subset \R^2
\]
indexed by intervals $ (a,b) \subset \R $, and the corresponding
sub-\sif s
\[
\F_{a,b} = \F_{G_f^{a,b}} \subset \F \, .
\]
We observe that
\[
\F_{a,b} \text{ and } \F_{b,c} \text{ are independent}
\]
whenever $ a<b<c $; also,
\[
\F_{a,b+\eps} \vee \F_{b,c} = \F_{a,c}
\]
whenever $ a<b<b+\eps<c $; and nevertheless
\[
\F_{a,b} \vee \F_{b,c} \ne \F_{a,c} \, .
\]

\bigskip
\filbreak
{
\small
\begin{sc}
\parindent=0pt\baselineskip=12pt
\parbox{4in}{
Boris Tsirelson\\
School of Mathematics\\
Tel Aviv University\\
Tel Aviv 69978, Israel
\smallskip
\par\quad\href{mailto:tsirel@post.tau.ac.il}{\tt
 mailto:tsirel@post.tau.ac.il}
\par\quad\href{http://www.tau.ac.il/~tsirel/}{\tt
 http://www.tau.ac.il/\textasciitilde tsirel/}
}

\end{sc}
}
\filbreak

\end{document}